\theoremstyle{plain}\newtheorem{theorem}{Theorem}
\newtheorem{corollary}[theorem]{Corollary}
\newtheorem{prop}[theorem]{Proposition}
\newcommand{\veps}{\varepsilon}
\newcommand{\Var}{\mathop{\mathrm{Var}}}
\renewcommand\d{\mathrm{d}}
\newcommand{\ind}{\mathbbm{1}} 
\title[Moments of general time dependent branching processes]
{Moments of general time dependent branching processes with applications} 
\author{Tam\'as F.~M\'ori}
 \address{Department of Probability Theory and Statistics\\
 E\"otv\"os Lor\'and University\\
P\'azm\'any P.~s. 1/C, 
 H-1117 Budapest, Hungary}
 \email{mori@math.elte.hu}
\author{S\'andor Rokob}
\address{Department of Stochastics\\
 Budapest University of Technology and Economics\\
 Egry J.~u. 1, 
 H-1111 Budapest, Hungary}
 \email{roksan@math.bme.com}
\thanks{This work was supported by the Hungarian
  National Research, Development and Innovation Office NKFIH, grant
  number K 125569 (T.F.~M\'ori), and the \'UNKP-17-2 New National Excellence
  Program of the Ministry of Human Capacities (S.~Rokob).}
  \thanks{ Corresponding author: T.~F.~M\'ori.}
\subjclass[2010]{05C80, 60F25, 60J85}
\date{07 May 2019}
\begin{document}

\begin{abstract}
In this paper, we give sufficient conditions for a Crump--Mode--Jagers process to be 
bounded in $L_k$ for a given $k>1$. This result is then applied to a recent random 
graph process motivated by pairwise collaborations and driven by time-dependent 
branching dynamics. We show that the maximal degree has the same rate of increase
as the degree process of a fixed vertex.
\end{abstract}

\maketitle
\thispagestyle{empty}

\noindent {\small {\it Keywords:} Crump--Mode--Jagers process, Burkholder--Rosenthal 
inequality, renewal equation, evolving random graph, maximal degree} 

\section{Introduction}

The present paper is devoted to the study of some properties of Crump--Mode--Jagers 
(CMJ) processes. More precisely, we are interested in conditions sufficient for a suitably 
scaled CMJ process (counted by appropriate random characteristics) to have finite 
$k{\text{th}}$ moments, uniformly in $t \geq 0$. Many properties of CMJ 
processes have already been studied in full details, but little is known about the asymptotics 
of higher moments, apart from the variance, which was used to prove convergence in 
probability before new techniques made it possible to prove almost sure results. 

Our research is motivated by the problem of 
describing the asymptotic behaviour of the maximal degree in a recently introduced 
random graph process evolving in continuous time \cite{MR17}. That graph process, called 
the collaboration model, is driven by time-dependent branching dynamics. Our results are 
strong enough to attack the order of magnitude of the maximal degree in the motivating 
random graph process; however, the present paper can only be considered the first 
step towards a general theory, as these results are surely open to improvement.

CMJ processes or, in other words, general time-dependent branching processes, have been present in
the theory of stochastic processes as well as in modelling and various applications for half a century.
The classical theory of a.s. convergence of CMJ processes counted with random characteristics was 
completed by the middle of the eighties \cite{JN}, but several extensions and generalizations are still 
at the forefront of research. For instance, in biological applications it is natural to suppose that a process 
describing the increase of a certain population cannot grow indefinitely. This led to the introduction of
population-size-dependent branching processes \cite{Jagers96,JK}. In the classical theory the
existence of the so called Malthusian parameter is assumed, which implies exponential growth in the
supercritical case, but recently efforts are made to develop a theory of CMJ processes in the setting where 
no Malthusian parameter exist, hence the process gows faster than exponential \cite{KOM}, etc.

Among many different fields of applications, the CMJ processes can really be helpful in the analysis of 
random graph models motivated by real life networks. For example, one can consider asymptotic 
properties of random tree growth models driven by preferential attachment dynamics \cite{BHAMIDI, RT, RTV}, 
its generalizations \cite{DEREICH} or the behaviour of spreading processes on different random graph models 
\cite{KOM}. Without attempting to be comprehensive we also mention \cite{AGS,Devroye,HJ,JL}.
 In all of these cases this general branching process enters the picture in a very natural way.

In the first part of this article, we prove that the proper $L_k$ boundedness of the 
random characteristic stochastic processes associated with the CMJ process guarantees the 
required $L_k$ boundedness of the CMJ process itself. Then we turn to the application of 
our results to the collaboration model, and we conclude that in our random graph process 
the maximal degree has the same asymptotic order of magnitude as the degree of an 
arbitrary vertex.

Although the general theory of CMJ processes is widely known (a comprehensive 
summary can be found e.g. in \cite[Ch.6]{Jagers} or \cite{HJV}), for the sake of 
convenience in the sequel we briefly and informally recollect some basic notions and properties.

We consider a supercritical CMJ process given by the triple 
$(\lambda,\,\xi,\,\phi)$, where $\lambda$ and $\xi(t)$, $t\ge 0$, are 
the random life span, and the reproduction process, resp., of a generic individual, 
and $\phi(t)$,  $t\ge 0$, is a nonnegative random characteristic. Here $\xi(t)$ 
is the number of offspring up to and including time $t$. By appointment, 
$\xi(t)=\xi(t\wedge\lambda)$, that is, no reproduction takes place after 
death.  In addition, $\xi(t)=\phi(t)=0$ for $t<0$. 
We also suppose that the point process $\xi$ is not a lattice. 

The process of interest is the following.
At time $0$, a single individual, called the ancestor, starts its life. It  produces offspring according
to the random point process $\xi(t)$, up to its death time $\lambda$. All its descendants, 
independently of all other individuals, produce their own offspring, and so on. At every instant we
evaluate each individual by a random characteristic and consider the sum of individual values.
We remark that the condition of starting with a single ancestor can obviously be relaxed. In case of $n$ ancestors
we are dealing with $n$ i.i.d. copies of the single ancestor CMJ process.

Many of the well-known properties of discrete time Galton--Watson processes are inherited 
by the far more general CMJ processes. One of these attributes is self-similarity, which is nothing 
else than the possibility of representing the process as the superposition of a random number 
of retarded independent copies of the original process. Since this feature will be used in the sequel, 
let us give it in more details, introducing some more notations.

The process we are studying is
\[
Z^{\phi}(t)=\sum_\ell \phi_\ell(t-\tau_\ell),
\]
where the summation runs over all individuals $\ell$ ever born, including 
the ancestor, $\tau_\ell$ is the birth time of individual $\ell$ (zero for the 
ancestor). Particularly, for $\phi(t)=\ind(t\ge 0)$ we obtain $Z^\phi(t)=T(t)$, 
the number of individuals, dead or alive, born up to and including time $t$.
(Here and in the sequel $\ind(\,.\,)$ stands for the indicator of the event in brackets.)
Thus we have
\begin{equation*}
T(t)=\sum_{\ell} \ind(\tau_{\ell}\le t).
\end{equation*}
Labelling individuals in chronological order we can write
\begin{equation}\label{zphi}
Z^{\phi}(t)=\sum_{i=0}^{T(t)-1}\phi_i(t-\tau_i).
\end{equation}
Clearly,
\begin{equation}\label{alap}
Z^\phi(t)=\phi_0(t)+\sum_{i=1}^{\xi_0(t)}Z_i^{\phi}(t-\sigma_i),
\end{equation}
where $\sigma_1,\,\sigma_2,\,\dots$ are the birth times (in increasing order) 
of the children of the ancestor, the corresponding reproduction processes 
and random characteristics are denoted by $\xi_i$ and $\phi_i$, 
$i=1,\,2,\,\dots$, while $\xi_0$ and $\phi_0$ belong 
to the ancestor. Each child starts its own CMJ process counted by the random 
characteristic; they are denoted by $Z_i^\phi$, $i=1,\,2,\,\dots$.

Finally, let $\mu(t)=E\xi(t)$; the Lebesgue--Stiletjes measure it generates is called the 
reproduction measure. We assume the existence of 
a positive $\alpha$, called \emph{the Malthusian parameter} for which 
\begin{equation}\label{Malthus}
\int_0^\infty e^{-\alpha t}\mu(\d t)=1.
\end{equation}
In this case the CMJ process is said to be in the so-called supercritical regime, and on the event of 
non-extinction, the probability of which is strictly less than $1$, its growth rate is exponential with 
exponent $\alpha$.

\section{$L_k$ bounds for CMJ processes}

In this section, we will first present sufficient conditions for a CMJ process $Z^{\phi}$ 
to fall in $L_k$ at a fixed time $t$, then we turn to the whole process, and try
to find conditions under which the suitably normed CMJ process is bounded in $L_k$.

Following Nerman \cite{Nerman}, let us introduce
\[
{}_\alpha\xi(t)=\int_0^t e^{-\alpha s}\xi(\d s), \quad
_{\alpha}\mu(t)=E\big({}_\alpha\xi(t)\big)=\int_0^t e^{-\alpha s}\mu(\d s),
\]
then $_\alpha\xi(\infty)=\int_0^\infty e^{-\alpha t}\xi(\d t)$, thus 
${}_{\alpha}\mu(\infty)=E(_\alpha\xi(\infty))=1$ by \eqref{Malthus}. Here 
and in the sequel the domain of integration is always closed.

Instead of dealing with $k$th moments, we rather focus on $L_k$ norms. 
For a random variable $U$ let
\[
{\|U\|}_k=\left[E(|U|^k)\right]^{1/k}.
\]
By using these, we can state and prove our first proposition, which deals with the $L_k$ 
boundedness of the process $T(t)$.

\begin{prop}\label{T(t)}
Let $t>0$ and $k>1$ be fixed. Suppose ${\|\xi(0)\|}_k<1$ and ${\|\xi(t)\|}_k<\infty$. 
Then ${\|T(t)\|}_k<\infty$.
\end{prop}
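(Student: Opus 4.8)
The plan is to bound $\|T(t)\|_k$ by truncating the family tree at a finite number of generations. For $n\ge 0$ write $T^{(n)}(s)$ for the number of individuals in generations $0,1,\dots,n$ born up to time $s$ (so $T^{(0)}(s)=\ind(s\ge 0)$, the ancestor alone). Every individual has a finite generation, so $T^{(n)}(s)\uparrow T(s)$ pointwise as $n\to\infty$, and monotone convergence gives $\|T^{(n)}(t)\|_k\uparrow\|T(t)\|_k$; hence it is enough to bound $\|T^{(n)}(t)\|_k$ uniformly in $n$. Specialising \eqref{alap} to $\phi(s)=\ind(s\ge 0)$ and keeping only the first $n$ generations yields, for $n\ge 1$,
\[
T^{(n)}(s)=1+\sum_{i=1}^{\xi_0(s)}T^{(n-1)}_i(s-\sigma_i),
\]
where, conditionally on the ancestor's reproduction process $\xi_0$ (hence on the $\sigma_i$ and on $\xi_0(s)$), the $T^{(n-1)}_i$ are independent, with $T^{(n-1)}_i$ distributed as the $(n-1)$-generation count started afresh. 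A short induction on $n$ from this identity, using $\|\xi(t)\|_k<\infty$ and $T^{(n)}(s)\le T^{(n)}(t)$ for $s\le t$, shows that every $\|T^{(n)}(t)\|_k$ is finite; this a priori finiteness is exactly what will let us close the fixed-point estimate below, and is the reason I truncate rather than manipulate $T$ directly.

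The driving mechanism is a contraction produced by the hypothesis $\|\xi(0)\|_k<1$. As $\xi$ is the right-continuous counting function of an a.s.\ locally finite point process, $\xi(\delta)\downarrow\xi(0)$ as $\delta\downarrow 0$; since $\xi(\delta)\le\xi(t)\in L_k$ for $\delta\le t$, dominated convergence gives $\|\xi(\delta)\|_k\to\|\xi(0)\|_k<1$, so we may fix $\delta\in(0,t]$ with $c:=\|\xi(\delta)\|_k<1$. In the recursion above I split the ancestor's children into those born in $[0,\delta]$ — there are at most $\xi_0(\delta)$ of them, a variable with $L_k$-norm $c$, and the age $s-\sigma_i$ of each of their subtrees is at most $s$ — and those born in $(\delta,s]$ — at most $\xi_0(s)\le\xi(t)$ of them, the age of each subtree now being strictly less than $s-\delta$. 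Applying conditional Minkowski to each group and using that for a random variable $N$ measurable with respect to the conditioning one has $\big\|\sum_{i=1}^N Y_i\big\|_k\le\|N\|_k\,b$ as soon as every conditional summand-norm is at most $b$, together with the monotonicity of $u\mapsto\|T^{(n-1)}(u)\|_k$, I obtain, for all $s\ge 0$ and with the convention $T(u)=0$ for $u<0$,
\[
\|T^{(n)}(s)\|_k\ \le\ 1+c\,\|T^{(n-1)}(s)\|_k+\|\xi(t)\|_k\,\|T^{(n-1)}(s-\delta)\|_k .
\]

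It remains to unwind this on the intervals $[0,\delta],[0,2\delta],\dots,[0,\lceil t/\delta\rceil\delta]\supseteq[0,t]$. On $[0,\delta]$ the last term only involves $\|T^{(n-1)}(s-\delta)\|_k\le\|T^{(n-1)}(0)\|_k$, which the same contraction bounds by $1/(1-c)$ uniformly in $n$ (the time-$0$ cluster is a Galton--Watson tree with offspring law $\xi(0)$); feeding this back and iterating the inequality in $n$ from $\|T^{(0)}(\delta)\|_k=1$ gives a finite bound $\|T^{(n)}(\delta)\|_k\le D_1$ valid for all $n$. Inductively, once $\|T^{(n)}((j-1)\delta)\|_k\le D_{j-1}$ for all $n$, the inequality yields $\|T^{(n)}(j\delta)\|_k\le D_j$ for all $n$, with $D_j$ finite (morally $D_j=(1+\|\xi(t)\|_k D_{j-1})/(1-c)$, up to harmless absolute constants coming from the $\|T^{(n)}(0)\|_k$-term). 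After $\lceil t/\delta\rceil$ steps we get $\|T^{(n)}(t)\|_k\le D_{\lceil t/\delta\rceil}<\infty$ uniformly in $n$, and letting $n\to\infty$ finishes the argument. The single delicate point is, as flagged, securing the a priori finiteness of the truncated norms before solving $x\le 1+cx+b$; the non-lattice hypothesis is not needed for this proposition.
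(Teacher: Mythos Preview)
Your argument is correct and takes a genuinely different route from the paper's.

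The paper proceeds by coupling: it advances all births in $[0,\varepsilon]$ to time $0$ and all births in $(\varepsilon,t]$ to time $\varepsilon$, obtaining a dominating reproduction process $\xi'$. The cluster of simultaneous births at $0$ is then handled by a subcritical Galton--Watson process with offspring law $\xi(\varepsilon)$ (total progeny $\zeta\in L_k$), and after a relabelling that folds these instantaneous descendants into their farthest coeval ancestor, the process on $[0,t]$ becomes an embedded Galton--Watson process with $N=t/\varepsilon$ generations and offspring law $\sum_{i\le\xi(t)-\xi(\varepsilon)}\zeta_i$, whose $L_k$-norm is finite.

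You instead work directly with the generation-truncated counts $T^{(n)}$ and the self-similarity recursion, splitting the ancestor's children at the threshold $\delta$. The conditional Minkowski step gives the clean inequality
\[
\|T^{(n)}(s)\|_k\le 1+c\,\|T^{(n-1)}(s)\|_k+\|\xi(t)\|_k\,\|T^{(n-1)}(s-\delta)\|_k,
\]
and you bootstrap in time over intervals of length $\delta$, each step turning a recursion $a_n\le A+ca_{n-1}$ into a uniform bound $A/(1-c)+1$. Both proofs hinge on the same observation---choose $\delta$ (or $\varepsilon$) small enough that $\|\xi(\delta)\|_k<1$---and both effectively march across $[0,t]$ in $\lceil t/\delta\rceil$ steps. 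Your version is more elementary: it avoids the coupling and the somewhat delicate recoloring/relabelling algorithm, replacing them by a straightforward norm recursion. The paper's version, on the other hand, makes the reduction to a finite Galton--Watson process explicit and perhaps more visual. Your closing remark that the non-lattice hypothesis is not used here is also correct; the paper's proof does not invoke it either.
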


\begin{proof}
The proof will be performed by coupling the original CMJ process with another, 
more tractable one where birth events are more frequent. This will be done in 
two steps. 

Clearly, if the reproduction process $\xi(s)$ is replaced with another one $\xi'(s)$, 
such that $\xi'(s)\ge\xi(s)$ for every $s\ge 0$, then the two CMJ processes can 
be coupled in such a way that the total number of individuals 
born up to and including $s$ cannot decrease: $T'(s)\ge T(s)$ for every 
$s\ge 0$. 

Since ${\|\xi(s)\|}_k$ is right continuous by the monotone convergence theorem,
one can find a sufficiently large integer $N$ such that ${\|\xi(\veps)\|}_k<1$ 
for $\veps=t/N$.
Now, let
\[
\xi'(s)=\begin{cases*}
\xi(\veps) & if $0\le s<\veps$, \\
\xi(t) & if $\veps\le s <t$, \\
\xi(s) & if $t\le s$;
\end{cases*}
\]
that is, all birth events taking place in the interval $[0,\veps]$ are advanced to
time zero, and birth events between $\veps$ and $t$ are advanced to time $\veps$.
In this way $\xi'(s)\ge \xi(s)$ for every nonnegative $s$. Note that 
$E\xi'(0)\le {\|\xi'(0)\|}_k={\|\xi(\veps)\|}_k<1$.

Note that $\xi'(0)$ is the (random) number of children born at the 
very moment of the birth of their mother, the ancestor. Each of them may 
immediately give birth to new individuals, which are the grandchildren of the ancestor, 
and so on. The successive generations, all starting their lifes at the same moment, 
form a Galton--Watson process with offspring distribution $\xi'(0)$.
Since $\mu'(0)= E\xi'(0)<1$, this process is subcritical, and the number $\zeta$ of 
all individuals ever born is almost surely finite, as it has a finite mean
$[1-\mu'(0)]^{-1}$. All these descendants can be considered as siblings of the 
ancestor; in other words, every individual who was born simultaneously with its parent 
will be considered the sibling of its farthest coeval ancestor. In this way we obtain a new
CMJ process with the same life length $\lambda$ but with multiple ancestors and a different reproduction 
process: $\xi'$ is replaced by $\xi''$, which can be obtained from $\xi'$ as follows.
$\xi''(0)=0$  (that is, no offspring at time zero), and whenever an individual is born 
according to the original process $\xi'$, the new process $\xi''$ gives birth to a random 
number of individuals, equidistributed with $\zeta$. Thus, 
\[
\xi''(s)=\sum_{i=1}^{\xi'(s)-\xi'(0)}\zeta_i,
\]
where $\zeta_1,\,\zeta_2,\,\dots$ are i.i.d. copies of $\zeta$. The CMJ process defined by
$\xi'$ can be equivalently replaced with a CMJ process that starts with $\zeta$ ancestors and
is governed by the reproduction process $\xi''$, in the sense that the total number of individuals 
born up to time $t$ is the same for the two processes: $T'(t)=T''(t)$.

We can formalize the construction of $\xi''$ by using family trees (phylogenetic trees).
In a CMJ process every individual can be labelled with a finite string of positive integers
based on the parent-child relations. Usually the ancestor is associated with the empty 
string, but if we allow for the possibility of several ancestors, we had better to label 
ancestors with consecutive positive integers (length $1$ strings), starting with $1$. 
Then the $j$th child (in the order of birth) of the individual labelled with $A=(a_1,a_2,\dots,a_k)$ 
is labelled with $(A, j)$. That is, individual $(a_1,\dots,a_k)$ is the $a_k$th child of 
individual $(a_1,\dots,a_{k-1})$, who is the $a_{k-1}$st child of its parent, and so on.
Consider the family tree of the CMJ process generated by the reproduction process $\xi'$,
and in the labels color every number red that corresponds to an individual born 
at the very same moment with its parent (all the other numbers are left black). 
Now, let us eliminate red numbers successively.
Order the labels with shortlex ordering, i.e., firstly, according to the lengths of the strings,
the shorter always comes before the longer, then strings of the same length are ordered 
lexicographically. The algorithm consists of the following steps repeated iteratively.
\begin{enumerate}[(i)]
\item \textit{Recoloring by relabelling}. Take the first label that is not entirely black. Suppose its length is $k$.
It contains exactly one red number and it is at the last position: $A=(B,a,1)$, where $B$ can 
be empty. Let 
\[
b=1+\max\{x: (B,x) \text{ was born at the same time as individual }A\},
\]
then $b>a$. In all strings beginning with $(B,x)$ with $x\ge b$ change 
$x$ to $x+1$ in the $(k-1)$st position with keeping the colors. Then replace $A$ with the purely black 
$A'=(B,b)$ in all strings containing $A$ as a prefix.
\item \textit{Compression}.
Change all remaining strings of the form $(B,a,x)$ to $(B,a,x-1)$ in all their occurrences as prefixes.
\end{enumerate}

We emphasize that this algorithm does not change the birth times, only the family relations, thus
$T'(t)=T''(t)$. As there can be infinitely many individuals same age as their parents, the algorithm 
does not necessarily stop in finitely many steps, but fixing an arbitrarily large bound $N$, eventually 
no labels of length less than $N$ would change.

Next, we will show that ${\|\zeta\|}_k<\infty$. 

To this end, first note that a random sum $\sum_{i=1}^N Y_i$, where the summands $Y_i$ 
are i.i.d. and $N$ is independent of them, has finite $k$th moment provided 
so do $Y_1$ and $N$. Indeed, by the power mean inequality we have
\begin{multline*}
E\bigg(\Big|\sum_{i=1}^N Y_i\Big|^k\bigg)\le E\bigg(N^{k-1}\sum_{i=1}^N
|Y_i|^k\bigg)=E\Bigg[ E\bigg(N^{k-1}\sum_{i=1}^N |Y_i|^k\biggm| N \bigg)
\Bigg]\\
= E\big(N^k E|Y_1|^k\big)=E(N^k) E|Y_1|^k <\infty.
\end{multline*}
Let $G_n$ denote the size of generation $n$ in the Galton--Watson process above,
$n=0,\,1,\,\dots$, $G_0=1$. Then $G_n$ is a random sum with $N=G_{n-1}$, 
and we get by induction that ${\|G_n\|}_k\le \big({\|\xi(0)\|}_k\big)^n$. Hence
\[
{\|\zeta\|}_k=\bigg\|\sum_{j=0}^\infty G_j\bigg\|_k\le \sum_{j=1}^\infty {\|G_j\|}_k
\le \frac{1}{1- {\|\xi(0)\|}_k}<\infty.
\]

For $s\le t$, the process $T''(s)$ behaves like a Galton--Watson process with
$0$-th generation $\zeta$, and offspring distribution 
\[
\xi''(\veps)=\sum_{i=1}^{\xi(t)-\xi(\veps)}\zeta_i,
\]
where the discrete process is embedded in continuous time with intergeneration 
time $\veps$. Let us redefine $G_i$, $i=0,\,1,\,\dots$ as the generation sizes
of this latter Galton--Watson process, then $T''(t)=G_0+G_1+\dots+G_N$.
This time we have
\[
{\|G_i\|}_k=\big({\|\xi''(\veps)\|}_k\big)^{i+1}\le
\big({\|\zeta\|}_k {\|\xi(t)-\xi(\veps)\|}_k\big)^{i+1}\le
\big({\|\zeta\|}_k {\|\xi(t)\|}_k\big)^{i+1}<\infty,
\]
therefore ${\|T''(t)\|}_k<\infty$, thus the proof is completed.
\end{proof}

Next, let us extend Proposition \ref{T(t)} to a more general class of random characteristics.

\begin{prop}\label{Zphi}
Let $t>0$ be fixed, and $Y=\sup_{s\le t}\phi(s)$. If ${\|Y\|}_k<\infty$ 
and ${\|T(t)\|}_k<\infty$, then ${\|Z^{\phi}(t)\|}_k<\infty$.
\end{prop}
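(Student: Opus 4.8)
The plan is to bound $Z^{\phi}(t)$ pointwise by $Y$ times $T(t)$ and then estimate the $L_k$ norm of the product. From the representation \eqref{zphi} we have
\[
Z^{\phi}(t)=\sum_{i=0}^{T(t)-1}\phi_i(t-\tau_i)\le\sum_{i=0}^{T(t)-1}Y_i,
\]
where $Y_i=\sup_{s\le t}\phi_i(s)$, since $\tau_i\ge 0$ implies $t-\tau_i\le t$ (and $\phi_i$ vanishes on negatives, so the supremum over $s\le t$ genuinely dominates). Here the $Y_i$ are i.i.d.\ copies of $Y$, but — and this is the subtle point — they are \emph{not} independent of $T(t)$, because the values of the characteristics of already-born individuals influence nothing, yet the random time $t-\tau_i$ at which we evaluate them, and indeed $T(t)$ itself, are built from the same underlying reproduction processes that also carry the $\phi_i$'s. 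So the clean random-sum argument from the proof of Proposition \ref{T(t)} does not apply verbatim.

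To get around this, I would use the fact that $T(t)$ only depends on the life spans and reproduction processes $(\lambda_\ell,\xi_\ell)$ of the individuals, whereas each $\phi_\ell$ is an additional independent randomization attached to individual $\ell$; conditionally on the whole genealogy (equivalently, on $\mathcal{G}=\sigma(T(t),\,\tau_0,\tau_1,\dots)$), the characteristics $\phi_i$ are still i.i.d.\ with the law of $\phi$. More carefully, one can take a product space on which $\phi_\ell(\cdot)=\psi(\cdot\,,\,U_\ell)$ for i.i.d.\ uniform randomizers $U_\ell$ independent of the genealogy; then $Y_i=\sup_{s\le t}\psi(s,U_i)$ are i.i.d.\ copies of $Y$ and independent of $\mathcal{G}$. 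Condition on $\mathcal{G}$: the number of summands $T(t)$ becomes a constant, and $\sum_{i=0}^{T(t)-1}Y_i$ is then an \emph{honest} sum of a deterministic number of i.i.d.\ terms.

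The key inequality is then the conditional power-mean bound used already in the previous proof: for $n\ge 1$, $\big(\sum_{i=0}^{n-1}Y_i\big)^k\le n^{k-1}\sum_{i=0}^{n-1}Y_i^k$, hence
\[
E\Big[\big(Z^{\phi}(t)\big)^k\,\big|\,\mathcal{G}\Big]\le E\Big[\,T(t)^{k-1}\!\!\sum_{i=0}^{T(t)-1}Y_i^k\,\Big|\,\mathcal{G}\Big]=T(t)^{k}\,E\big(Y^k\big),
\]
using that the $Y_i$ are i.i.d.\ and independent of $\mathcal{G}$. Taking expectations,
\[
E\big[(Z^{\phi}(t))^k\big]\le E\big(T(t)^k\big)\,E\big(Y^k\big)={\|T(t)\|}_k^k\,{\|Y\|}_k^k<\infty
\]
by the two hypotheses, which gives ${\|Z^{\phi}(t)\|}_k\le {\|T(t)\|}_k\,{\|Y\|}_k<\infty$.

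The main obstacle is the one flagged above: being careful that although $Z^{\phi}(t)$ is a random sum, the number of terms $T(t)$ is measurable with respect to the genealogy while the summands $Y_i$, after a suitable independent-randomization construction of the characteristics, are i.i.d.\ and independent of that genealogy — so that conditioning on $\mathcal{G}$ legitimately reduces it to a deterministic-length sum and the power-mean step goes through. Everything else (the pointwise domination by $Y_i$, the power-mean inequality, and the final application of the hypotheses) is routine; one may alternatively phrase the same computation directly via Minkowski's inequality applied on the conditional space without ever invoking power means.
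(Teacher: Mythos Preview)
Your argument has a genuine gap: the claim that the characteristics $\phi_i$ can be realised as independent randomisations, independent of the genealogy $\mathcal{G}=\sigma(\xi_0,\xi_1,\dots)$, is not part of the CMJ framework used here. Each individual carries a triple $(\lambda_i,\xi_i,\phi_i)$, and only the \emph{triples} are i.i.d.\ across individuals; within an individual, $\phi_i$ may depend on $(\lambda_i,\xi_i)$. The most basic examples already violate your independence assumption: $\phi(t)=\ind(t<\lambda)$ (``alive at age $t$'', used later in the paper) and $\phi(t)=\xi(t)$ both make $Y_i$ a function of $(\lambda_i,\xi_i)$. In such cases, conditioning on $\mathcal{G}$ (which contains $\sigma(\xi_0,\xi_1,\dots)$) distorts the law of the $Y_i$, so the step $E\big[\sum_{i<T(t)}Y_i^k\,\big|\,\mathcal{G}\big]=T(t)\,E(Y^k)$ fails, and with it the bound $\|Z^\phi(t)\|_k\le\|T(t)\|_k\|Y\|_k$.

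The paper circumvents this by exploiting only the \emph{sequential} independence that does hold: for each $i$, $Y_i$ (a function of $\phi_i$ alone) is independent of $\mathcal{F}_{i-1}=\sigma((\xi_j,\phi_j):j<i)$, while $\{i\le\nu\}=\{\tau_i\le t\}$ is $\mathcal{F}_{i-1}$-measurable. This makes $\nu=T(t)-1$ a stopping time and $M_n=\sum_{i\le\nu\wedge n}(Y_i-EY)$ a martingale, to which the Burkholder--Rosenthal inequality applies; the conditional variance process and the $k$th-moment term are then controlled using only $\|T(t)\|_k$ and $\|Y\|_k$. Your power-mean shortcut would work under the extra hypothesis $\phi\perp(\lambda,\xi)$, but not in the generality of the proposition; as stated, something like the paper's stopping-time/martingale argument (or an equivalent device that respects the one-sided independence) is needed.
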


\begin{proof}
Let $\mathcal F_n$ be the sigma-field generated by the processes 
$(\xi_i,\phi_i)$, $0\le i\le n$, and $Y_i=\sup_{s\le t}\phi_i(s)$, $i=0,\,1,\,\dots$. 
Clearly, they are i.i.d. random variables, and adapted to the filtration 
$(\mathcal F_n,\,n\ge 0)$. Moreover, $\nu=T(t)-1$ is a stopping time, for 
the event $\{\nu\le n\}=\{\tau_{n+1}>t\}$ is determined by 
$\xi_0,\,\xi_1,\,\dots,\xi_n$.

Obviously,
\[
Z^{\phi}(t)=\sum_{\tau_i\le t}\phi_i(t-\tau_i)\le \sum_{\tau_i\le t}Y_i=
\sum_{i=0}^{T(t)-1}Y_i.
\]
Let $\widetilde Y_i=Y_i-EY$, and $M_n=\sum_{i=0}^{\nu\wedge n}
\widetilde Y_i$, where $\wedge$ stands for minimum. Then $(M_n,\,
\mathcal F_n,\, n\ge 0)$ is a convergent martingale with differences 
$d_n=\widetilde Y_n \ind(n\le\nu)$. Clearly,
\[
M_\infty=\lim_{n\to\infty}M_n=\sum_{i=0}^{\nu}\widetilde Y_i=
\sum_{i=0}^{T(t)-1}Y_i - T(t)EY.
\]
Introduce the conditional variance process 
\begin{multline*}
A_n=E\big(d_0^2\big)+\sum_{i=1}^n E\big(d_i^2\bigm|\mathcal F_{i-1}\big)
=E\big(\widetilde Y_0^2\big)+\sum_{i=1}^n E\big(\widetilde Y_i^2\bigm|
\mathcal F_{i-1}\big) \ind(i\le \nu)\\
= (1+n\wedge \nu)E\big(\widetilde Y_0^2\big)=
\big[T(t)\wedge(n+1)\big]\Var(Y).
\end{multline*}
It converges to $A_{\infty}=T(t)\Var(Y)$.

By the Burkholder--Rosenthal inequality \cite{B} we have
\[
{\|M_\infty\|}_k\le C_k\left( \big\|A_\infty^{1/2}\big\|_k+\Bigg\|\bigg(
\sum_{i=0}^\infty |d_i|^k\bigg)^{\!1/k}\Bigg\|_k\right).
\]

Here $ \big\|A_\infty^{1/2}\big\|_k=\Var(Y)^{1/2}{\|T(t)\|}_{k/2}$, and
\[
\Bigg\|\bigg(\sum_{i=0}^\infty |d_i|^k\bigg)^{\!1/k}\Bigg\|_k =
\Bigg[E\bigg(\sum_{i=0}^\infty |d_i|^k\bigg)\Bigg]^{\! 1/k}
=\Bigg[E\bigg(\sum_{i=0}^\infty |\widetilde Y_i|^k \ind(i\le \nu)\bigg)\Bigg]^{\! 1/k}.
\]
Since $\widetilde Y_i$ and $ \ind(i\le \nu)$ are independent, we get
\[
\Bigg\|\bigg(\sum_{i=0}^\infty |d_i|^k\bigg)^{\!1/k}\Bigg\|_k
=\Bigg[E\big( |\widetilde Y_0|^k\big)\sum_{i=0}^\infty P(i\le \nu)\Bigg]^{\! 1/k}
=\big\|\widetilde Y_0\big\|_k \big(ET(t)\big)^{1/k}.
\]
The conditions of Proposition \ref{Zphi} imply that ${\|M_\infty\|}_k<\infty$.
Since $Z^{\phi}(t)\le M_{\infty}+T(t)EY$, we finally conclude that
${\|Z^{\phi}(t)\|}_k<\infty$, as needed.
\end{proof} 

It would be desirable to take the supremum out of the norm. At the moment we 
can only do it by requiring more of  $T(t)$.

\begin{prop}\label{Zphi2}
 Suppose $\sup_{s\le t}{\|\phi(s)\|}_k <\infty$ and 
${\|T(t)\|}_p<\infty$ for some $p>k$. Then ${\|Z^{\phi}(t)\|}_k<\infty$.
\end{prop}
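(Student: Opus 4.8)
The plan is to run the martingale argument behind Proposition~\ref{Zphi} once more, but adapted to the weaker hypothesis that only $\sup_{s\le t}\|\phi(s)\|_k<\infty$ holds, rather than $\bigl\|\sup_{s\le t}\phi(s)\bigr\|_k<\infty$. Since there is now no single dominating variable to bound the summands by, I would keep the individual contributions $\phi_i(t-\tau_i)$, reveal the individuals one at a time in chronological order, and center each contribution by its conditional mean given the past. With $\mathcal F_n=\sigma\bigl((\xi_i,\phi_i):0\le i\le n\bigr)$ as in the proof of Proposition~\ref{Zphi}, the decisive observation is that $\tau_i$ is $\mathcal F_{i-1}$-measurable whereas the pair $(\xi_i,\phi_i)$ is independent of $\mathcal F_{i-1}$; hence $E\bigl(\phi_i(t-\tau_i)\mid\mathcal F_{i-1}\bigr)=g(\tau_i)$, where $g(s):=E\phi(t-s)$, and since $\phi\ge0$ we have $0\le g(s)\le\|\phi(t-s)\|_k\le K:=\sup_{s\le t}\|\phi(s)\|_k$ for $0\le s\le t$. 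Thus the centering function $g$ is bounded.

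Concretely, put $\nu=T(t)-1$, which is a stopping time for $(\mathcal F_n)$ as already observed in the proof of Proposition~\ref{Zphi}, and set $\widetilde d_i=\bigl(\phi_i(t-\tau_i)-g(\tau_i)\bigr)\ind(i\le\nu)$ and $M_n=\sum_{i=0}^n\widetilde d_i$. Then $(M_n,\mathcal F_n)$ is a martingale, only the indices $i\le\nu$ contribute, and
\[
Z^\phi(t)=\sum_{i=0}^{\nu}\phi_i(t-\tau_i)=M_\infty+\sum_{i=0}^{\nu}g(\tau_i)\le M_\infty+K\,T(t).
\]
Since $\|T(t)\|_k\le\|T(t)\|_p<\infty$, the task reduces to bounding $\|M_\infty\|_k$. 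Using once more that $\phi_i$ is independent of $\mathcal F_{i-1}$, together with $g\le K$,
\[
E\bigl(|\widetilde d_i|^k\mid\mathcal F_{i-1}\bigr)=\ind(i\le\nu)\,E\bigl|\phi(t-\tau_i)-g(\tau_i)\bigr|^k\le 2^kK^k\,\ind(i\le\nu),
\]
so $\sum_iE|\widetilde d_i|^k\le 2^kK^k\sum_iP(i\le\nu)=2^kK^k\,ET(t)<\infty$. For $1<k\le2$ this already yields $\|M_\infty\|_k<\infty$ by the Burkholder--Davis--Gundy inequality and the subadditivity of $x\mapsto x^{k/2}$, since $E|M_\infty|^k\le C_k\,E\bigl(\sum_i\widetilde d_i^2\bigr)^{k/2}\le C_k\sum_iE|\widetilde d_i|^k$. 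For $k\ge2$ I would instead reproduce the Burkholder--Rosenthal computation of Proposition~\ref{Zphi}: now $\|\phi(s)\|_2\le\|\phi(s)\|_k\le K$, so the conditional variance process is dominated by $K^2\,T(t)$, and both terms in the Burkholder--Rosenthal bound are finite because $\|T(t)\|_k<\infty$ and $ET(t)<\infty$. In either case $\|M_\infty\|_k<\infty$, whence $\|Z^\phi(t)\|_k\le\|M_\infty\|_k+K\,\|T(t)\|_k<\infty$.

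The one place that has to be set up with care --- and the main obstacle --- is the order of conditioning: the centering must be by the \emph{unconditional} mean of $\phi_i$, not by its mean given the whole reproduction skeleton $\sigma(\xi_0,\xi_1,\dots)$. Conditioning on the skeleton would produce the centering constant $E\bigl(\phi(t-\tau_i)\mid\xi_i\bigr)$, which under the sole assumption $\sup_s\|\phi(s)\|_k<\infty$ need not be bounded, nor even integrable, uniformly in $\xi_i$; likewise, a direct Hölder split of $E\bigl(T(t)^{k-1}\sum_i\phi_i(t-\tau_i)^k\bigr)$ fails because it would force $\phi$ into $L_{k'}$ for some $k'>k$. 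Revealing the individuals in chronological order is exactly what keeps the pair $(\xi_i,\phi_i)$ fresh at step $i$, and hence makes the centering term bounded; once that is in place, the hypothesis $\|T(t)\|_p<\infty$ with $p>k$ is comfortably more than the elementary moment estimates above require.
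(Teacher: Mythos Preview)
Your proof is correct, and it takes a genuinely different route from the paper's own argument. The paper does \emph{not} adapt the martingale method of Proposition~\ref{Zphi}; instead it simply applies the triangle inequality in $L_k$,
\[
\|Z^\phi(t)\|_k\le\sum_{i\ge 0}\bigl\|\phi_i(t-\tau_i)\ind(\tau_i\le t)\bigr\|_k
\le\sup_{s\le t}\|\phi(s)\|_k\,\sum_{i\ge 0}P(T(t)>i)^{1/k},
\]
and then uses H\"older to dominate $\sum_i P(T(t)>i)^{1/k}$ by a constant multiple of $\|T(t)\|_p^{p/k}$. The extra hypothesis $p>k$ is exactly what makes this last series converge; without it the paper's estimate would diverge even for bounded $\phi$.

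Your approach sidesteps this entirely. By centering each $\phi_i(t-\tau_i)$ with its conditional mean $g(\tau_i)=E\phi(t-\tau_i)$ you exploit the independence of $(\xi_i,\phi_i)$ from $\mathcal F_{i-1}$ and the measurability of $\tau_i$ with respect to $\mathcal F_{i-1}$, so that the centering function is bounded by $K=\sup_{s\le t}\|\phi(s)\|_k$ and the Burkholder-type estimates go through using only $\|T(t)\|_k<\infty$ (indeed only $ET(t)<\infty$ for the square-function term). As you yourself note in the last paragraph, the condition $p>k$ is never used. In effect you have shown that the sentence preceding Proposition~\ref{Zphi2} in the paper---``At the moment we can only do it by requiring more of $T(t)$''---is too pessimistic: your argument proves the common strengthening of Propositions~\ref{Zphi} and~\ref{Zphi2}, namely that $\sup_{s\le t}\|\phi(s)\|_k<\infty$ and $\|T(t)\|_k<\infty$ already suffice for $\|Z^\phi(t)\|_k<\infty$. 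What the paper's direct approach buys is brevity and the avoidance of any martingale machinery; what yours buys is a sharper hypothesis.
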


\begin{proof}
$\displaystyle 
Z^{\phi}(t)=\sum_{i=0}^{\infty}\phi_i(t-\tau_i)\ind(\tau_i\le t)$,
therefore we obtain
\[
{\|Z^{\phi}(t)\|}_k \le\sum_{i=0}^{\infty}{\|\phi_i(t-\tau_i)\ind(\tau_i\le t)\|}_k\\
=\sum_{i=0}^{\infty}\Big(E\big[\phi_i(t-\tau_i)^k \ind(\tau_i\le t)\big]\Big)^{1/k}.
\]
Take conditional expectation given $\tau_i$ in the terms of the sum on the right-hand side.
By the independence of $\phi_i$ and $\tau_i$ we get
\begin{multline*}
E\big[\phi_i(t-\tau_i)^k \ind(\tau_i\le t)\big]=
E\big[E\big(\phi_i(t-\tau_i)^k\bigm|\tau_i\big) \ind(\tau_i\le t)\big]\\
\le E\Big[\sup_{s\le t}E\big(\phi_i(s)^k\big) \ind(\tau_i\le t)\Big]
=\sup_{s\le t}E\big(\phi_i(s)^k\,P(\tau_i\le t).
\end{multline*}
Hence, 
\[
Z^{\phi}(t)\le \sup_{s\le t}{\|\phi_i(s)\|}_k \sum_{i=0}^\infty 
\big[P(\tau_i\le t)\big]^{1/k}=\sup_{s\le t}{\|\phi_i(s)\|}_k 
\sum_{i=0}^\infty \big[P(T(t)> i)\big]^{1/k}.
\]
Let $p>k$. By the H\"older inequality we have
\begin{align*}
\sum_{i=1}^{\infty}\big[P(T(t)\ge i)\big]^{1/k}&=\sum_{i=1}^{\infty}
\big[ i^{\,p-1} P(T(t)\ge i)\big]^{1/k}{\strut i}^{\,-(p-1)/k}\\
&\le \bigg[ \sum_{i=1}^{\infty} i^{\,p-1} P(T(t)\ge i)\bigg]^{\tfrac{1}{k}}
\  \bigg[ \sum_{i=1}^{\infty}{\strut }i^{\,-\tfrac{p-1}{k-1}} \bigg]^{\tfrac{k-1}{k}}\\
&\le \text{Const.}\big({\|T(t)\|}_p\big)^{p/k}.
\end{align*}
This completes the proof.
\end{proof} 

\begin{theorem}\label{thm1}
Let $k$ be a positive integer. Suppose ${\|Z^\phi(t)\|}_k<\infty$ for every 
$t\ge 0$; furthermore, $A=:{\|{}_\alpha\xi(\infty)\|}_k<\infty$, and
$B=:\sup_{t\ge 0}e^{-\alpha t}{\|\phi(t)\|}_k<\infty$. Then
\begin{equation*}
\sup_{t\ge 0} e^{-\alpha t}{\|Z^\phi(t)\|}_k<\infty.
\end{equation*}
\end{theorem}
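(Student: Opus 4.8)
The plan is to pass to the rescaled process $\hat Z(t):=e^{-\alpha t}Z^\phi(t)$, with rescaled characteristic $\hat\phi(t):=e^{-\alpha t}\phi(t)$, and to prove by induction on $k$ that the rescaled moment function
\[
\psi_k(t):=E\big(\hat Z(t)^k\big)=\big(e^{-\alpha t}\|Z^\phi(t)\|_k\big)^k
\]
is bounded in $t\ge 0$, which is precisely the conclusion of the theorem. Multiplying the branching decomposition \eqref{alap} by $e^{-\alpha t}$ yields the rescaled self-similarity relation
\[
\hat Z(t)=\hat\phi_0(t)+\sum_{i\ge 1}X_i,\qquad X_i:=e^{-\alpha\sigma_i}\,\hat Z_i(t-\sigma_i)\,\ind(\sigma_i\le t)\ge 0,
\]
where the $\hat Z_i$ are i.i.d.\ copies of $\hat Z$, independent of the ancestor's reproduction point process $\xi_0$, and $\hat\phi_0$ is the ancestor's rescaled characteristic. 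The rescaling turns $\mu$ into ${}_\alpha\mu$, a probability measure by \eqref{Malthus}, and --- the decisive point --- turns $\mu_k(\d s):=e^{-k\alpha s}\mu(\d s)$ into a \emph{strictly subcritical} measure for every integer $k\ge 2$: $\rho_k:=\mu_k\big([0,\infty)\big)=\int_0^\infty e^{-k\alpha s}\mu(\d s)<1$. I will repeatedly use the monotonicity of $L_j$ norms: $\|{}_\alpha\xi(\infty)\|_j\le A$ and $e^{-\alpha t}\|\phi(t)\|_j\le B$ for all $1\le j\le k$.

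For the base case $k=1$, taking expectations in the rescaled relation shows that $m:=\psi_1$ solves the renewal equation $m=E\hat\phi+m*{}_\alpha\mu$. Since ${}_\alpha\mu$ is a probability measure with finite mean, I would conclude $\sup_{t\ge 0}m(t)<\infty$ from the key renewal theorem, after checking that the forcing term $E\hat\phi(t)=e^{-\alpha t}E\phi(t)$ is directly Riemann integrable (this is the point where the regularity of $\phi$ behind the hypothesis on $B$ enters). For non-integer $k$ one would instead have to invoke the Burkholder--Rosenthal inequality as in Proposition~\ref{Zphi}, and this is why integrality of $k$ is assumed here.

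For the inductive step, fix $k\ge 2$ and suppose $\sup_{t\ge 0}\psi_j(t)=:M_j<\infty$ for $1\le j<k$. Raise the rescaled relation to the $k$th power and expand $\big(\hat\phi_0(t)+\sum_i X_i\big)^k$ by the multinomial theorem; every resulting term is nonnegative. Condition on $(\xi_0,\phi_0)$: since the $\hat Z_i$ for distinct $i$ are mutually independent and independent of $(\xi_0,\phi_0)$, each mixed term factorizes, and for pairwise distinct indices $i_1,\dots,i_r$ with $b_1+\dots+b_r=k$,
\[
E\Big(X_{i_1}^{b_1}\cdots X_{i_r}^{b_r}\,\Big|\,\xi_0\Big)=\prod_{\ell=1}^{r} e^{-b_\ell\alpha\sigma_{i_\ell}}\,\psi_{b_\ell}(t-\sigma_{i_\ell})\,\ind(\sigma_{i_\ell}\le t).
\]
The unique term of top order is the diagonal sum $\sum_i X_i^k$; after taking expectations it contributes exactly $\int_0^t\psi_k(t-s)\,\mu_k(\d s)=(\psi_k*\mu_k)(t)$, with coefficient $1$. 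In every remaining term at least two distinct indices occur, so each exponent satisfies $b_\ell\le k-1$, hence $\psi_{b_\ell}(\,\cdot\,)\le M_{b_\ell}$; bounding $\sum_i e^{-b_\ell\alpha\sigma_i}\ind(\sigma_i\le t)\le{}_\alpha\xi_0(\infty)$ (valid since $b_\ell\ge1$) and controlling the ensuing powers of ${}_\alpha\xi_0(\infty)$ and of $\hat\phi_0(t)$ by H\"older together with $\|{}_\alpha\xi(\infty)\|_k\le A$ and $e^{-\alpha t}\|\phi(t)\|_k\le B$, one sees that the sum of all these terms is at most a constant $r_k$ not depending on $t$. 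Thus $\psi_k(t)\le r_k+(\psi_k*\mu_k)(t)$.

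To close the loop, note that, $\mu_k$ being a subprobability measure of total mass $\rho_k<1$, iterating this inequality $n$ times gives $\psi_k(t)\le r_k\sum_{j=0}^{n-1}\rho_k^{\,j}+\big(\psi_k*\mu_k^{*n}\big)(t)$; the last term is at most $\big(\sup_{0\le s\le t}\psi_k(s)\big)\rho_k^{\,n}$, which tends to $0$, so once one has checked --- by a short bootstrap from the same inequality and the standing finiteness $\|Z^\phi(s)\|_k<\infty$ --- that $\psi_k$ is bounded on bounded intervals, letting $n\to\infty$ gives $\sup_{t\ge 0}\psi_k(t)\le r_k/(1-\rho_k)<\infty$. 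This completes the induction, and since $e^{-\alpha t}\|Z^\phi(t)\|_k=\psi_k(t)^{1/k}$ the theorem follows. The step I expect to be the main obstacle is the combinatorial bookkeeping in the inductive step: verifying that the diagonal term is the \emph{only} contribution of top order, so that it is paired with the subcritical kernel $\mu_k$ with coefficient exactly $1$, and that each of the remaining multinomial terms is genuinely uniformly bounded --- this is exactly where $A=\|{}_\alpha\xi(\infty)\|_k<\infty$ and $B=\sup_{t\ge0}e^{-\alpha t}\|\phi(t)\|_k<\infty$ get consumed, through H\"older estimates for products of $\hat\phi_0(t)$ with powers of ${}_\alpha\xi_0(\infty)$. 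A secondary technical point is the local-boundedness argument needed to run the final iteration, and, in the base case, the direct Riemann integrability of the renewal forcing term.
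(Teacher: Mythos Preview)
Your approach is correct and is essentially the paper's own proof: induction on $k$, self-similarity expanded to the $k$th power, isolation of the unique ``diagonal'' contribution $\sum_j X_j^k$ paired with the strictly subcritical kernel $e^{-k\alpha s}\mu(\d s)$, H\"older control of all remaining multinomial terms via $A$ and $B$, and iteration of the resulting renewal-type inequality using the standing hypothesis $\|Z^\phi(t)\|_k<\infty$. The only cosmetic difference is that the paper carries the induction on the mixed moments $M_k(t,\mathbf{h})=E\prod_{i=1}^k e^{-\alpha t_i}Z^\phi(t_i)$ at possibly distinct times $t_1\le\dots\le t_k$, whereas you work directly with $\psi_k(t)=M_k(t,\mathbf{0})$; since starting from a single time $t$ produces only single-time lower-order moments in the expansion, your streamlined version is fully adequate for the theorem as stated.
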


\begin{proof}
Let $t_1\le t_2\le\dots\le t_k$, and $t=t_1$, $h_i=t_{i+1}-t_i$, 
$i=1,\,\dots, k-1$. Introduce
\[
M_k(t,\mathbf{h})=E\left[\prod_{i=1}^k e^{-\alpha t_i} 
Z^{\phi}(t_i)\right].
\]
Particularly, $M_k(t,\mathbf{0})=\Big(e^{-\alpha t}{\|Z^{\phi}(t)\|}_k\Big)^k<\infty$.
By H\"older's inequality we have 
\[
M_k(t,\mathbf{h})\le\Big(M_k(t_1,\mathbf{0})\dots M_k(t_k,\mathbf{0})\Big)^{\!1/k}
<\infty.
\]

We will prove that $M_k(t,\mathbf{h})\le C_k$ for every $t$ 
and $\mathbf{h}$, where the upper bound $C_k$ may depend on 
$\phi$. We do it by induction over $k$.

For $k=1$ it follows easily, since $e^{-\alpha t}EZ^{\phi}(t)$ converges as 
$t\to\infty$, see \cite[Theorem 5.4]{Nerman}, and it is bounded in every interval $[0,t]$, as
$\sup_{s\le t}e^{-\alpha t}EZ^{\phi}(t)\le \sup_{s\le t}e^{-\alpha s}E\phi(s)\,ET(t)$.

Let $k>1$. By \eqref{alap} we have
\begin{align*}
\prod_{i=1}^k Z^{\phi}(t_i)&=\prod_{i=1}^k\phi_0(t_i)\\
&\qquad+\sum_{\emptyset\neq H\subset\{1,\dots,k\}}\bigg(
\prod_{i\notin H}\phi_0(t_i)\sum_{\substack{1\le j_i\le \xi_0(t_i),\\
\forall i\in H}}\ \prod_{i\in H} Z^{\phi}_{j_i}(t_i-\sigma_{j_i})\bigg).
\end{align*}
Here the quantities $\prod_{i\notin H}\phi_0(t_i)$ and 
$\big( Z^{\phi}_{j_i}(t_i-\sigma_{j_i}),\ i\in H\big)$ are conditionally
independent given $\xi_0$. Therefore, by fixing $H\ne\emptyset$ and taking 
conditional expectation at first, we obtain
\begin{align*}
e^{-\alpha(t_1+\dots+t_k)}&E\Bigg[
\prod_{i\notin H}\phi_0(t_i)\sum_{\substack{1\le j_i\le \xi_0(t_i),\\
\forall i\in H}}\ \prod_{i\in H} Z^{\phi}_{j_i}(t_i-\sigma_{j_i})\Bigg]\\
&=E\Bigg[E\bigg(\prod_{i\notin H}e^{-\alpha t_i}\phi_0(t_i)\biggm|\xi_0\bigg)
\\
&\qquad\times\sum_{\substack{1\le j_i\le \xi_0(t_i),\\
\forall i\in H}}\ \prod_{i\in H}e^{-\alpha \sigma_{j_i}}\ E\bigg(
\prod_{i\in H} e^{-\alpha(t_i-\sigma_{j_i})}Z^{\phi}_{j_i}(t_i-\sigma_{j_i})
\biggm|\xi_0\bigg)\Bigg].
\end{align*}
For $H=\{1,\,\dots,\,k\}$ we will separate the terms where $j_1=j_2=\dots 
=j_k=j\le\xi_0(t_1)$. Since the processes $Z_j^{\phi}(t-\sigma_j)$ are 
conditionally independent given $\xi_0$, provided the indices $j$ are all
different, in all remaining cases the conditional expectation in the last 
sum is of the following form: 
$\prod_{i=1}^{k-1}[M_i(\dots)]^{\nu_i}$, 
where $\nu_i\ge 0$ and $\sum_{i=1}^{k-1}i\nu_i=|H|$.
By the induction hypothesis, these terms can be estimated by
\[
\varrho_k=\max\left\{\, \prod_{i=1}^{k-1}C_i^{\nu_i}:
\nu_i\ge 0\text{ and }\sum_{i=1}^{k-1}i\nu_i=|H|\, \right\}\ge 1.
\]
Consequently,
\begin{equation}\label{Mk}
M_k(t,\mathbf{h})=E\Bigg[\sum_{j=1}^{\xi_0(t)}e^{-k\alpha\sigma_j}
M_k(t-\sigma_j,\mathbf{h})\Bigg]+R,
\end{equation}
where
\begin{align*}
R&\le E\Bigg[E\bigg(\prod_{i=1}^k e^{-\alpha t_i}\phi_0(t_i)\biggm|\xi_0\bigg)\\
&\quad+\varrho_k\sum_{\emptyset\ne H\subset\{1,\dots,k\}}E\bigg(\prod_{i\notin H}
e^{-\alpha t_i}\phi_0(t_i)\biggm|\xi_0\bigg)\sum_{\substack{1\le j_i\le \xi_0(t_i),\\
\forall i\in H}}\ \prod_{i\in H}e^{-\alpha \sigma_{j_i}}\Bigg]
\end{align*}
The first term on the right-hand side of \eqref{Mk} corresponds to the case where
$H=\{1,\,\dots,\,k\}$ and  $j_1=j_2=\dots =j_k$, and the remainder $R$ to all remaining cases.
The last sum can be treated in the following way.
\[
\sum_{\substack{1\le j_i\le \xi_0(t_i),\\ \forall i\in H}}\ 
\prod_{i\in H}e^{-\alpha \sigma_{j_i}}=
\prod_{i\in H}\ \sum_{j=1}^{\xi_0(t_i)}e^{-\alpha \sigma_j}
=\prod_{i\in H}\ \int_0^{t_i}e^{-\alpha s}\xi_0(\d s)\le
\big({}_{\alpha}\xi_0(\infty)\big)^{|H|}.
\]
By the conditional H\"older inequality we have
\[
E\bigg(\prod_{i\notin H}e^{-\alpha t_i}\phi_0(t_i)\biggm|\xi_0\bigg)\le
\bigg[\prod_{i\notin H}E\Big(e^{-k\alpha t_i}\phi_0(t_i)^k\Bigm|\xi_0\Big)
\bigg]^{\!1/k}.
\]
Again by H\"older, 
\begin{multline*}
E\Bigg(\bigg[\prod_{i\notin H}E\Big(e^{-k\alpha t_i}\phi_0(t_i)^k\Bigm|
\xi_0\Big)\bigg]^{\!1/k}\big({}_{\alpha}\xi_0(\infty)\big)^{|H|}\Bigg)\\
\le{\|{}_{\alpha}\xi_0(\infty)\|}_k^{|H|}
\prod_{i\notin H} e^{-\alpha t_i}{\|\phi(t_i)\|}_k\le A^{|H|}B^{k-|H|},
\end{multline*}
thus
\[
R\le \varrho_k\sum_{H\subset\{1,\dots,k\}} A^{|H|}B^{k-|H|}=\varrho_k (A+B)^k.
\]
The first term on the right-hand side of \eqref{Mk} can be written as an 
integral.
\begin{align*}
E\Bigg[\sum_{j=1}^{\xi_0(t)}e^{-k\alpha\sigma_j}
M_k(t-\sigma_j,\mathbf{h})\Bigg]&=
E\Bigg[\int_0^t e^{-k\alpha s}M_k(t-s,\mathbf{h})\,\xi(\d s)\Bigg]\\
&=\int_0^t e^{-k\alpha s}M_k(t-s,\mathbf{h})\,\mu(\d s).
\end{align*}
Define
\[ 
m=\int_0^{\infty} e^{-k\alpha t}\mu(\d t)=
\int_0^{\infty} e^{-(k-1)\alpha t}{}_{\alpha}\mu(\d t).
\]
Clearly, $m<1$, because $_{\alpha}\mu(\d t)$ is a probability measure. Let
\[
\tilde\mu(t)=m^{-1}\int_0^t e^{-k\alpha s}\mu(\d s),
\]
then $\tilde\mu(\d t)$ is also a probability measure. Using all these we arrive 
at the following inequality:
\begin{equation}\label{iter}
M_k(t,\mathbf{h})\le m\int_0^t M_k(t-s,\mathbf{h})\,\tilde\mu(\d s)+\varrho_k(A+B)^k.
\end{equation}
Since $M_k(t,\mathbf{h})$ is nonnegative and finite, standard methods of renewal theory can be applied to show that
\begin{equation}\label{end}
M_k(t,\mathbf{h})\le \frac{\varrho_k(A+B)^k}{1-m}
\end{equation}
holds for every $t$ and $\mathbf{h}$, thus the constant on right-hand side of \eqref{end} can serve as $C_k$. 
Indeed, by iterating inequality \eqref{iter} we have
\begin{multline*}
M_k\le m\, M_k*\tilde\mu+\gamma\le m^2M_k*\tilde\mu*\tilde\mu+\gamma+\gamma m\tilde\mu\le \dots \\
\le m^n M_k*\tilde\mu^{(n)}+
\gamma \sum_{i=0}^{n-1}m^i \tilde\mu^{(i)}
\le m^n\,M_k*\tilde\mu^{(n)}+ \gamma \sum_{i=0}^{\infty}m^i,
\end{multline*}
for every positive integer $n$, where $*$ denotes convolution, $\tilde\mu^{(n)}$ stands for
convolution power, and $\gamma=\varrho_k(A+B)^k$. Letting $n$ tend to infinity we get
\eqref{end}.
\end{proof}

By combining Propositions \ref{T(t)}--\ref{Zphi2} with Theorem \ref{thm1} we get
\begin{corollary}\label{cor1}
The uniformly $L_k$ boundedness
\[
\sup_{t\ge 0} e^{-\alpha t}{\|Z^{\phi}(t)\|}_k<\infty
\]
holds, if any of the following sets of conditions fulfils.
\begin{equation}\label{cond1}
{\|\xi(0)\|}_k<1,\quad {\|{}_\alpha\xi(\infty)\|}_k<\infty,\quad 
{\|\sup\nolimits_{t\ge 0}e^{-\alpha t}\phi(t)\|}_k<\infty,
\end{equation}
or
\begin{equation}\label{cond2}
{\|\xi(0)\|}_p<1,\ {\|{}_\alpha\xi(\infty)\|}_p<\infty\text{ for some }p>k,\quad
\sup\nolimits_{t\ge 0}e^{-\alpha t}{\|\phi(t)\|}_k<\infty.
\end{equation}
\end{corollary}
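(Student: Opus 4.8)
The plan is to read both hypothesis sets \eqref{cond1} and \eqref{cond2} as being tailored to feed directly into Theorem~\ref{thm1}, so the whole task reduces to checking the three assumptions of that theorem: (a) $\|Z^{\phi}(t)\|_k<\infty$ for every fixed $t\ge 0$; (b) $A=\|{}_\alpha\xi(\infty)\|_k<\infty$; and (c) $B=\sup_{t\ge 0}e^{-\alpha t}\|\phi(t)\|_k<\infty$. Once these are in place the conclusion is verbatim the assertion of Theorem~\ref{thm1}. Here $k$ is the positive integer of Theorem~\ref{thm1}; since Propositions~\ref{T(t)}--\ref{Zphi2} are stated for $k>1$ we may take $k\ge 2$ (the case $k=1$ being the classical first-moment statement already quoted in the proof of Theorem~\ref{thm1}).

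Parts (b) and (c) are almost free. Under \eqref{cond1}, $A<\infty$ is assumed, and from the pointwise bound $e^{-\alpha t}\phi(t)\le\sup_{s\ge 0}e^{-\alpha s}\phi(s)$, valid for every $t$, one gets $B\le\|\sup_{s\ge 0}e^{-\alpha s}\phi(s)\|_k<\infty$ by taking $L_k$ norms. Under \eqref{cond2}, $B<\infty$ is assumed, and by monotonicity of $L_r$ norms on a probability space ($p>k$) we obtain $A=\|{}_\alpha\xi(\infty)\|_k\le\|{}_\alpha\xi(\infty)\|_p<\infty$. For part (a) the key elementary observation is that on $[0,t]$ one has $e^{-\alpha s}\ge e^{-\alpha t}$, so $\xi(t)\le e^{\alpha t}\int_{[0,t]}e^{-\alpha s}\xi(\d s)\le e^{\alpha t}\,{}_\alpha\xi(\infty)$, and hence $\|\xi(t)\|_q\le e^{\alpha t}\|{}_\alpha\xi(\infty)\|_q$ for any $q$; similarly $\phi(s)\le e^{\alpha t}e^{-\alpha s}\phi(s)$ for $s\le t$, giving $\sup_{s\le t}\phi(s)\le e^{\alpha t}\sup_{s\ge 0}e^{-\alpha s}\phi(s)$ and $\sup_{s\le t}\|\phi(s)\|_k\le e^{\alpha t}\sup_{s\ge 0}e^{-\alpha s}\|\phi(s)\|_k$. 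Now fix $t$. Under \eqref{cond1}: $\|\xi(0)\|_k<1$ and $\|\xi(t)\|_k\le e^{\alpha t}\|{}_\alpha\xi(\infty)\|_k<\infty$, so Proposition~\ref{T(t)} gives $\|T(t)\|_k<\infty$; since also $\|\sup_{s\le t}\phi(s)\|_k\le e^{\alpha t}\|\sup_{s\ge 0}e^{-\alpha s}\phi(s)\|_k<\infty$, Proposition~\ref{Zphi} yields $\|Z^{\phi}(t)\|_k<\infty$. Under \eqref{cond2}: applying Proposition~\ref{T(t)} with the exponent $p$ in place of $k$ (its hypotheses $\|\xi(0)\|_p<1$ and $\|\xi(t)\|_p\le e^{\alpha t}\|{}_\alpha\xi(\infty)\|_p<\infty$ hold) gives $\|T(t)\|_p<\infty$ with $p>k$; together with $\sup_{s\le t}\|\phi(s)\|_k<\infty$, Proposition~\ref{Zphi2} gives $\|Z^{\phi}(t)\|_k<\infty$. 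In either case all hypotheses of Theorem~\ref{thm1} hold, and applying it completes the proof.

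I do not expect any genuine obstacle: the argument is pure bookkeeping that simply matches the available inputs to the right proposition. The only point requiring a line of care is the passage from the uniform, exponentially weighted hypotheses on $e^{-\alpha t}\phi(t)$ and on ${}_\alpha\xi(\infty)$ to the plain single-time finiteness statements demanded by Propositions~\ref{T(t)}--\ref{Zphi2}; this is dispatched by the two pointwise inequalities $\xi(t)\le e^{\alpha t}\,{}_\alpha\xi(\infty)$ and $\phi(s)\le e^{\alpha t}e^{-\alpha s}\phi(s)$ on $[0,t]$ together with the elementary norm monotonicity $\|\cdot\|_k\le\|\cdot\|_p$ used for \eqref{cond2}.
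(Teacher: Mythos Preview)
Your proof is correct and follows exactly the approach the paper indicates: the paper's proof is the single line ``By combining Propositions~\ref{T(t)}--\ref{Zphi2} with Theorem~\ref{thm1}'', and you have carried out precisely that bookkeeping. The only additions you make beyond the paper are the explicit pointwise inequalities $\xi(t)\le e^{\alpha t}\,{}_\alpha\xi(\infty)$ and $\sup_{s\le t}\phi(s)\le e^{\alpha t}\sup_{s\ge 0}e^{-\alpha s}\phi(s)$ needed to verify the fixed-$t$ hypotheses of the propositions, which are correct and routine.
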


\section{Application: a branching random graph process}

Now, let us turn our attention to the problem of the asymptotic behaviour of the maximal degree
in the collaboration model. This randomly evolving graph process was introduced in \cite{MR17}, 
and it is defined as follows.

At the beginning, there are two vertices connected with a single edge. From this initial state, the 
graph changes in continuous time; its evolution is governed by a CMJ process defined on the edges. 
More precisely, for every edge there is a homogeneous Poisson process with unit density which rules 
its reproduction. These Poisson processes are independent. At every reproduction event a 
new vertex is added to the graph, and it is connected to one or both endpoints of the parent edge. 
Thus, the edge reproduction process, denoted by $\xi(.)$, is in fact a compound Poisson process with 
jumps at every birth event, and the jump size is equal to $2$ with probability $p$, and $1$ with probability $q = 1-p$.

Furthermore, as usually with CMJ processes, death (which now means deletion) is a feature of the 
edges. The lifetime of an edge is in strong connection with its fertility. To describe the connection 
 -- beside the naturally interpretable physical age -- we define the biological age of an edge as the 
 number of its offspring up to the moment; then the hazard rate of the life length at physical age 
 $t$ and biological age $\xi(t)$ is equal to $b+c\,\xi(t)$, where $b$ and $c$ are positive constants. 
 It is important to note that at the death only the edge is deleted but not its endpoints: once a 
 vertex is added to the graph, it will remain in the process thereafter, possibly isolated.

In \cite{MR17} and \cite{MR18}, we dealt with some asymptotic properties of this process, 
moreover, we introduced its discrete, and generalized versions, too. For example, we have described
the asymptotic behaviour of the number of living edges, the biological age, or the ratio of the vertices 
and edges. In addition, we found an interesting, apparently contradictory difference between the tails 
of the physical and the biological age distributions. However, for the analysis of the maximal degree, 
we do not need all of those results. What we will use is summarized below.

In what follows we suppose that the CMJ process of the edges is 
supercritical with Malthusian parameter $\alpha$ satisfying
\begin{equation}\label{Malthus2}
\frac{1+p}{c}\int_0^1(1-u)^{\tfrac{\alpha+1+b}{c}-1}\exp\Big(
\frac{u(2-pu)}{2c}\Big)du=1.
\end{equation}

In \cite{MR17} we have seen that the degree process of a fixed vertex 
can also be described as a CMJ process with a random number of ancestors 
(individuals born at time $0$), namely, the process starts with a single 
ancestor with probability $q$, and with two ancestors with probability 
$p=1-q$. If a vertex is 
born with initial degree $1$, its degree process is a CMJ process defined 
by the pair $(\eta(\cdot),\,\lambda)$. Here the reproduction process is
\[
\eta(t)=\xi(t)-\pi(t\wedge\lambda)+\sum_{i=1}^{2\pi(t\wedge\lambda)
-\xi(t)}\delta_i,  
\]
where $(\xi(\cdot),\pi(\cdot),\lambda)$ are the reproduction process, the 
process of birth events, and the random lifetime, resp., that define the 
CMJ process of the edges, the random variables $\delta_1,\,\delta_2,\,\dots$
are i.i.d. with $P(\delta_i=0)=P(\delta_i=1)=1/2$, and they are
independent of $\pi(\cdot)$, $\xi(\cdot)$, and $\lambda$.

If a vertex is born with initial degree $2$, its degree process is the sum 
(superposition) of two independent CMJ processes introduced above.

It is proved in \cite[Theorem 6.1]{MR17}, that the degree 
process of a vertex is supercritical if and only if 
\begin{equation*}
E\eta(\infty)=\frac{1+p}{2c}\int_0^1 (1-u)^{\tfrac{1+b}{c}-1}\exp\Big(
\frac{u(2-pu)}{2c}\Big)du>1.
\end{equation*}
In the supercritical case the Malthusian parameter $\beta$ is the only
positive root of the equation
\[
\frac{1+p}{c}\int_0^1(1-u)^{\tfrac{\beta+1+b}{c}-1}\exp\Big(
\frac{u(2-pu)}{2c}\Big)du=2,
\]
and the probability of extinction (when the monitored vertex eventually
becomes isolated) is equal to $pz^2+qz$, where $z$ is the smallest
positive root of the equation
\begin{equation*}
\frac{1+p}{2c}\int_0^1{(1-u)\strut}^{\tfrac{1+b}{c}-1}
\exp\Big(\frac{1+p}{2c}\,u(1-u)z\Big)du=1.
\end{equation*}
Clearly, $\beta<\alpha$.

Let $\tau_i$ denote the birth time of the $i{\text{th}}$ vertex (labelled in 
birth order starting from zero), $D_i(t)$ its degree and $M(t)$ the 
maximal degree in the graph at time $t$. Clearly,
\[
M(t)=\sup_{0\le i}D_i(t-\tau_i)=\max_{0\le i\le V(t)}D_i(t-\tau_i),
\]
where $V(t)$ is the number of vertices at time $t$.

In the next section we will show that the maximal degree has the 
same order of magnitude as the individual degree processes $D_i(t)$.

\section{Maximal degree}

Let us generalize the problem. For a fixed vertex $i$ consider the CMJ process 
$D_i(\cdot)$ counted by the random characteristic $\phi(\cdot)$. This process 
will be denoted by $D_i^{\phi}(\cdot)$. Furthermore, let
\[
M^{\phi}(t)=\sup_{0\le i}D_i^{\phi}(t-\tau_i)=
\max_{0\le i\le V(t)}D_i^{\phi}(t-\tau_i),
\]
the maximum of the current value of $D^{\phi}$ over all existing vertices.
Let $\zeta_i(t)=e^{-\beta t}D_i^{\phi}(t)$. Suppose that 
$\sup_{t\ge 0}e^{-\beta t}{\|\phi(t)\|}_k<\infty$. From the general theory of CMJ 
processes we know that $\zeta_i(t)\to Y_i$ almost surely, as $t\to\infty$, 
where $Y_i$, $i=0,\,1,\,\dots$ are indentically distributed random 
variables, and $Y_i>0$ almost everywhere 
on the non-extinction event of the corresponding degree process, i.e., on 
the event that vertex $i$ does not eventually become isolated. 

Moreover, $Y_i\in L_k$, because the process $D^{\phi}(t)$ corresponding to a 
vertex born at time zero can be estimated by the sum of two CMJ degree processes 
counted by the random characteristic $\phi$, and so ${}_\beta\eta(\infty)\le\eta(\infty)\le
\xi(\infty)\in L_k$. Therefore condition \eqref{cond2} is satisfied, thus 
Corollary \ref{cor1} implies that the process $\zeta(t)=e^{-\beta t}
D^{\phi}(t)$ is bounded in $L_k$. Hence $Y=\lim_{t\to\infty}\zeta(t)\in L_k$.

\begin{theorem}\label{max}

Suppose $\sup_{t\ge 0}e^{-\beta t}{\|\phi(t)\|}_k<\infty$ for some $k>\alpha/\beta$.
Then 
\begin{equation}\label{L_k}
e^{-\beta t}M^{\phi}(t)\to\sup_{i\ge 0} e^{-\beta\tau_i}Y_i
=:\Delta
\end{equation}
in $L_k$.  The limit is positive on the event of 
non-exhaustion of the graph. Moreover,
\begin{equation}\label{liminf}
\liminf_{t\to\infty}e^{-\beta t}M^{\phi}(t)=\Delta
\end{equation}
almost surely.
\end{theorem}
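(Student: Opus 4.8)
The plan is to establish the three assertions of the theorem separately, with the $L_k$-convergence \eqref{L_k} as the core step and the other two following more easily. First I would verify that $\Delta$ is well-defined and in $L_k$. Since $\zeta_i(t)=e^{-\beta t}D_i^\phi(t)\to Y_i$ a.s.\ with $Y_i$ identically distributed and in $L_k$ (as argued just before the theorem), and since the birth times $\tau_i$ form the point process $T(\cdot)$ of the edge CMJ process with Malthusian parameter $\alpha$, the quantity $\sum_i e^{-k\beta\tau_i}$ has expectation $\int_0^\infty e^{-k\beta s}\,\mathrm{d}(ET(s))$. Because $e^{-\alpha s}ET(s)$ is bounded (it converges, see \cite[Theorem 5.4]{Nerman}) and $k\beta>\alpha$ by hypothesis, this integral is finite; hence $\sum_i e^{-k\beta\tau_i}<\infty$ a.s.\ and in $L_1$. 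Combined with the $Y_i$ being $L_k$-bounded and a conditioning argument on the tree of birth times, this yields $E(\Delta^k)\le E\big(\sum_i e^{-k\beta\tau_i}Y_i^k\big)<\infty$, so $\Delta\in L_k$ and in particular the supremum in \eqref{L_k} is a.s.\ finite. Positivity of $\Delta$ on the non-exhaustion event follows because on that event infinitely many vertices never become isolated, so infinitely many $Y_i$ are strictly positive.

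Next, for the $L_k$-convergence I would control $e^{-\beta t}M^\phi(t)-\Delta$ by splitting the vertices into those born before a large cutoff time $S$ and those born after. For the finitely many (in fact, $T(S)$ many) early vertices, $e^{-\beta t}D_i^\phi(t-\tau_i)=e^{-\beta\tau_i}\zeta_i(t-\tau_i)\to e^{-\beta\tau_i}Y_i$ a.s., and since there are finitely many of them the maximum over this set converges a.s.\ and, using the uniform $L_k$-boundedness of $\zeta_i$ from Corollary \ref{cor1} together with $k>\alpha/\beta$ to dominate, also in $L_k$; this maximum approximates $\sup_{i:\,\tau_i\le S} e^{-\beta\tau_i}Y_i$. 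For the late vertices I would bound $\sup_{i:\,\tau_i>S} e^{-\beta t}D_i^\phi(t-\tau_i)$ in $L_k$ by $\big\|\big(\sum_{i:\,\tau_i>S} e^{-k\beta\tau_i}(\sup_{u\le t-\tau_i}\zeta_i(u))^k\big)^{1/k}\big\|_k$, and since $\sup_{u}\zeta_i(u)$ is $L_k$-bounded uniformly in $i$ while $E\big(\sum_{i:\,\tau_i>S} e^{-k\beta\tau_i}\big)\to 0$ as $S\to\infty$ (tail of the convergent integral above), this term is small uniformly in $t$. A parallel estimate shows $\sup_{i:\,\tau_i>S} e^{-\beta\tau_i}Y_i$ is small in $L_k$. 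Letting first $t\to\infty$ and then $S\to\infty$ gives \eqref{L_k}.

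Finally, for the a.s.\ $\liminf$ statement \eqref{liminf}, the upper bound $\liminf_{t\to\infty} e^{-\beta t}M^\phi(t)\le\Delta$ is not quite immediate; but $\limsup$ along a subsequence could exceed $\Delta$, yet for the $\liminf$ we argue: for each fixed $i$, $e^{-\beta t}M^\phi(t)\ge e^{-\beta\tau_i}\zeta_i(t-\tau_i)\to e^{-\beta\tau_i}Y_i$, so $\liminf_t e^{-\beta t}M^\phi(t)\ge\sup_i e^{-\beta\tau_i}Y_i=\Delta$ a.s. For the reverse inequality, fix $\varepsilon>0$; choose $S$ so large that the $L_k$-norm of the late-vertex contribution is $<\varepsilon$, whence by a Borel--Cantelli argument applied along a sequence $t_n\to\infty$ (using the $L_k$-bound with $k>1$ to make the probabilities summable) the late contribution is eventually $<2\varepsilon$ a.s.; meanwhile the early-vertex maximum converges a.s.\ to $\sup_{i:\,\tau_i\le S} e^{-\beta\tau_i}Y_i\le\Delta$. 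Hence $\liminf_n e^{-\beta t_n}M^\phi(t_n)\le\Delta+2\varepsilon$, and a monotonicity/interpolation argument between the $t_n$ extends this from the subsequence to the full $\liminf$. I expect the main obstacle to be making the late-vertex tail estimate fully rigorous uniformly in $t$ — in particular handling the supremum $\sup_{u\le t-\tau_i}\zeta_i(u)$ inside an $L_k$ norm over a random (infinite) index set — which is exactly where the hypothesis $k>\alpha/\beta$ is used to beat the growth of $T(t)$, and where Corollary \ref{cor1} (applied to $\phi$ and to the running-maximum characteristic) must be invoked carefully.
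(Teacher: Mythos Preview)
Your overall strategy---show $\Delta\in L_k$, then a three-term splitting into early vertices, late vertices, and the target supremum, with the late-vertex tail controlled via $k\beta>\alpha$---is exactly the paper's. Two points of comparison are worth making.

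\textbf{The late-vertex estimate and your ``main obstacle''.} You bound the late contribution by inserting $\sup_{u}\zeta_i(u)$ inside the $L_k$ norm and then worry, correctly, that Corollary~\ref{cor1} only gives $\sup_t\|\zeta(t)\|_k<\infty$, not $\|\sup_t\zeta(t)\|_k<\infty$. The paper sidesteps this entirely: it keeps $\zeta_i(t-\tau_i)$ at the actual time, conditions on $\tau_i$, and uses that $\zeta_i(\cdot)$ is independent of $\tau_i$ to get
\[
E\big[\zeta_i^k(t-\tau_i)\bigm|\tau_i\big]\ \le\ \sup_{s\ge 0}E\big[\zeta_i^k(s)\big]
=\sup_{s\ge 0}\|\zeta_i(s)\|_k^k,
\]
which is precisely what Corollary~\ref{cor1} supplies. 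Thus $E\big[\sum_{i>n}e^{-k\beta\tau_i}\zeta_i^k(t-\tau_i)\big]\le\sup_s\|\zeta(s)\|_k^k\,E\big[\sum_{i>n}e^{-k\beta\tau_i}\big]$, and the second factor is handled by the same Fubini computation you outlined for $\Delta$. No running-maximum characteristic is needed; your obstacle dissolves once you condition on the birth time instead of taking a pathwise supremum.

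\textbf{Cutoff by index versus by time.} The paper truncates at the $n$th vertex rather than at a time $S$. With an index cutoff the early set has a deterministic (finite) number of terms, so the middle piece $Q_2$ is a maximum of $n+1$ sequences each converging in $L_k$; with your time cutoff the early set has $V(S)$ terms, a random and unbounded number, which makes the domination for the middle term more delicate. The paper's choice is cleaner, and the tail is then written as $E\big[\sum_{i>n}e^{-k\beta\tau_i}\big]=\int_0^\infty k\beta e^{-k\beta s}E(V(s)-n)^+\,ds\to 0$ by dominated convergence.

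\textbf{The upper bound in \eqref{liminf}.} Your Borel--Cantelli plus interpolation scheme is unnecessary. Once \eqref{L_k} is established, $L_k$-convergence yields an a.s.\ convergent subsequence $e^{-\beta t_n}M^\phi(t_n)\to\Delta$, hence $\liminf_{t\to\infty}e^{-\beta t}M^\phi(t)\le\Delta$ immediately; combined with your (correct) lower bound this gives \eqref{liminf}. This is exactly how the paper closes the argument in one line.
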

\begin{proof}
First of all, we will show that $\Delta\in L_k$. Using that $Y_i$ and $\tau_i$ 
are independent, we get
\begin{multline*}
E(\Delta^k)\le E\bigg[\sum_{i\ge 0} e^{-k\beta\tau_i}Y_i^k\bigg]
=E(Y_1^k)E\bigg[\sum_{i\ge 0} e^{-k\beta\tau_i}\bigg]\\
=E(Y_1^k)E\bigg[\int_0^{\infty}e^{-k\beta t}V(dt)\bigg].
\end{multline*}
By Fubini's theorem we have
\begin{align*}
E\bigg[\int_0^{\infty}e^{-k\beta t}V(dt)\bigg]&=
E\bigg[\int_0^{\infty}\int_t^{\infty}k\beta e^{-k\beta s}ds\,V(dt)\bigg]\\
&=E\bigg[\int_0^{\infty}k\beta e^{-k\beta s}\int_0^s V(dt)\,ds\bigg]\\
&=E\bigg[\int_0^{\infty}k\beta e^{-k\beta s}V(s)\,ds\bigg]\\
&=\int_0^{\infty}k\beta e^{-k\beta s}EV(s)\,ds.
\end{align*}
Here $EV(s)\le C\,e^{\alpha s}$, thus
\[
E(\Delta^k)\le C\,E(Y_1^k)\int_0^{\infty}k\beta e^{-(k\beta-\alpha)s}ds
<\infty,
\]
if $k\beta>\alpha$.

Next, let us estimate the $L_k$-distance of $e^{-\beta t}M^{\phi}(t)$ 
and $\Delta$. Clearly,
\begin{align*}
\big\|e^{-\beta t}M^{\phi}(t)-\Delta\big\|_k
&\le \Big\|e^{-\beta t}M^{\phi}(t)-\max_{i\le n}e^{-\beta t}
D_i^{\phi}(t-\tau_i)\Big\|_k\\
&+\Big\|\max_{i\le n}e^{-\beta t}D_i^{\phi}(t-\tau_i)-
\max_{i\le n}e^{-\beta \tau_i}Y_i\Big\|_k\\
&+\Big\|\Delta-\max_{i\le n}e^{-\beta \tau_i}Y_i\Big\|_k\\
&=: Q_1+Q_2+Q_3.
\end{align*}

Here $Q_1$ and $Q_3$ can be estimated uniformly in $t$.
\begin{align*}
Q_1^k &= E\bigg[e^{-\beta n}\Big(M^{\phi}(n)-
\max_{0\le i\le n}D_i^{\phi}(n-\tau_i)\Big)\bigg]^{\!k}\\
&\le E\bigg[\sum_{i>n}e^{-k\beta n}\big(D_i^{\phi}(t-\tau_n)\big)^k\bigg]\\
&=E\bigg[\sum_{i>n}e^{-k\beta \tau_i}\zeta_i^k(n-\tau_i)\bigg]
\end{align*}
Take conditional expectation inside of the sum with respect to $\tau_i$, 
and use that $\zeta_i(\cdot)$ is independent of $\tau_i$.
\begin{align*}
E\bigg[\sum_{i>n}e^{-k\beta \tau_i}\zeta_i^k(t-\tau_i)\bigg]
&=E\bigg[\sum_{i>n}e^{-k\beta \tau_i}E\big(\zeta_i^k(t-\tau_i)
\bigm|\tau_i\big)\bigg]\\
&\le \sup_{t\ge 0}E\big(\zeta_i^k(t)\big)\,E\bigg[\sum_{i>n}
e^{-k\beta \tau_i}\bigg].
\end{align*}
The first expectation on the right-hand side is finite by 
Corollary \ref{cor1}. The second expectation can be treated similarly 
to what we have done when $E(\Delta^k)$ was estimated.
\begin{align*}
E\bigg[\sum_{i>n}e^{-k\beta \tau_i}\bigg]&=
E\bigg[\int_0^{\infty}\ind(V(t)>n)e^{-k\beta t}V(dt)\bigg]\\
&=E\bigg[\int_0^{\infty}k\beta e^{-k\beta s}\int_0^s \ind(V(t)>n)V(dt)\,
ds\bigg]\\
&=E\bigg[\int_0^{\infty}k\beta e^{-k\beta s}(V(s)-n)^+\,
ds\bigg]\\
&=\int_0^{\infty}k\beta e^{-k\beta s}E(V(s)-n)^+\,ds
\end{align*}

Let us turn to $Q_3$. Similarly, we have
\begin{align*}
Q_3^k &= E\bigg[\sup_{i\le 0}e^{-\beta \tau_i}Y_i-
\max_{i\le n}e^{-\beta \tau_i}Y_i\bigg]^{\!k}\\
&\le E\bigg[\sum_{i>n} e^{-k\beta\tau_i}Y_i^k\bigg]\\
&=E(Y_1^k)\,E\bigg[\sum_{i>n} e^{-k\beta\tau_i}\bigg]\\
&\le \sup_{t\ge 0}E\big(\zeta_i^k(t)\big)\,E\bigg[\sum_{i>n}
e^{-k\beta \tau_i}\bigg],
\end{align*}
which has already been estimated above.

Finally, if $n$ is fixed, $\max_{i\le n}e^{-\beta t}D_i^{\phi}(t-\tau_i)
\to\max_{i\le n}e^{-\beta \tau_i}Y_i$ in $L_k$, as $t\to\infty$,
hence $Q_2\to 0$. Thus, for every positive integer $n$,  
\[
\limsup_{t\to\infty}\big\|e^{-\beta t}M^{\phi}(t)-\Delta\big\|_k\le
2\sup_{t\ge 0}{\|\zeta_i(t)\|}_k \bigg[\int_0^{\infty}k\beta 
e^{-k\beta s}E(V(s)-n)^+\,ds\bigg]^{\!1/k}.
\]
The integrand on the right-hand side is majorized by 
\[
k\beta e^{-k\beta s}EV(s)\le C\,k\beta e^{-(k\beta-\alpha)s},
\]
and it tends to $0$ pointwise as $n\to\infty$. Hence the dominated convergence 
theorem can be applied to complete the proof of \eqref{L_k}.

For the proof of \eqref{liminf} notice that
\begin{multline*}
\liminf_{t\to\infty}e^{-\beta t}M^{\phi}(t)\ge\liminf_{t\to\infty}\ 
\max_{i\le n}e^{-\beta t}D_i^{\phi}(t)
=\max_{i\le n}\ \lim_{t\to\infty} e^{-\beta t}D_i^{\phi}(t)\\
=\max_{i\le n}\ \lim_{t\to\infty}e^{-\beta \tau_i}\zeta_i(t-\tau_i)
=\max_{i\le n}e^{-\beta\tau_i}Y_i
\end{multline*}
for every positive integer $n$. Hence it follows that
$\liminf_{t\to\infty}e^{-\beta t}M^{\phi}(t)\ge\Delta$.
Comparing this with \eqref{L_k} we obtain \eqref{liminf}.
\end{proof}

Choosing $\phi(t)=\ind(t<\lambda)$ we get $D^{\phi}(t)=D(t)$, the 
degree process, and $M^{\phi}(t)=M(t)$, the maximal degree in the 
graph at time $t$.
\begin{corollary}\ 

$\liminf\limits_{t\to\infty}e^{-\beta t}M(t)=\Delta$
almost surely, and $\lim\limits_{t\to\infty}e^{-\beta t}M(t)=\Delta$ 
in $L_k$ for all $k\ge 1$.
\end{corollary}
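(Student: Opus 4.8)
\textit{Proof proposal.} The plan is to obtain the corollary as a direct specialization of Theorem \ref{max} to the random characteristic $\phi(t)=\ind(t<\lambda)$, and then to remove the restriction $k>\alpha/\beta$ by a soft norm-monotonicity argument. With this choice of $\phi$ the counted process $D^\phi$ is exactly the degree process: the individuals of the CMJ process $D_i(\cdot)$ are the edges that have ever been incident to vertex $i$, $\lambda$ is the lifetime of such an edge, so $\phi$ evaluates an edge to $1$ precisely while it is still present; summing this characteristic over all such edges as in \eqref{zphi} gives the current degree $D_i^{\phi}(t)=D_i(t)$, whence $M^{\phi}(t)=M(t)$, as already noted above. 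In particular the limiting variables $Y_i=\lim_{t\to\infty}e^{-\beta t}D_i^{\phi}(t)$ are the usual Malthusian limits of the individual degree processes, and $\Delta=\sup_{i\ge0}e^{-\beta\tau_i}Y_i$ keeps its meaning.

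Next I would check the hypothesis of Theorem \ref{max}. Since $0\le\phi(t)=\ind(t<\lambda)\le1$, we have ${\|\phi(t)\|}_k\le1$, hence $\sup_{t\ge0}e^{-\beta t}{\|\phi(t)\|}_k\le1<\infty$ for \emph{every} $k\ge1$; in particular this holds for some (indeed all) $k>\alpha/\beta$. Theorem \ref{max} therefore applies and yields, on the one hand, \eqref{liminf} in the present notation, i.e.\ $\liminf_{t\to\infty}e^{-\beta t}M(t)=\Delta$ almost surely, with $\Delta$ positive on the event that the graph is not eventually exhausted; and, on the other hand, the convergence $e^{-\beta t}M(t)\to\Delta$ in $L_k$ for every $k>\alpha/\beta$.

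It remains to cover the exponents $1\le k\le\alpha/\beta$. Here I would simply invoke the monotonicity of $L_p$-norms on a probability space: fixing any $k_0>\alpha/\beta$, the already established convergence $e^{-\beta t}M(t)\to\Delta$ in $L_{k_0}$ implies, via ${\|\,\cdot\,\|}_j\le{\|\,\cdot\,\|}_{k_0}$ for $j\le k_0$, convergence in $L_j$ for every $j\le k_0$, and in particular for all $1\le j\le\alpha/\beta$. Combining this with the range $k>\alpha/\beta$ handled by Theorem \ref{max} gives convergence in $L_k$ for all $k\ge1$, completing the proof. I do not expect a genuine obstacle in this argument; the only two points that deserve a word of care are the identification $D^{\phi}=D$ (which makes $\phi$ bounded and so renders the hypothesis of Theorem \ref{max} automatic) and the norm-monotonicity step bridging "some $k>\alpha/\beta$" in Theorem \ref{max} with "all $k\ge1$" claimed here.
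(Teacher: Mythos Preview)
Your proposal is correct and follows the same approach as the paper: specialize Theorem \ref{max} to the bounded characteristic $\phi(t)=\ind(t<\lambda)$, which renders the hypothesis automatic for every $k$. The paper leaves the passage from ``$k>\alpha/\beta$'' in Theorem \ref{max} to ``all $k\ge 1$'' implicit, and your norm-monotonicity step makes this explicit and rigorous.
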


We remark that a similar limit theorem is valid if the degree of a vertex is counted
by including the deceased, deleted edges. (We can think that all edges 
are red at the moment of birth, and when an edge dies, it is not deleted 
but recolored blue instead. Blue edges stop reproducing, but they  
still matter when degrees are counted.)

\section{Conclusions}

In this paper, we returned to the analysis of the collaboration model, introduced in \cite{MR17}. 
The main problem considered here was the order of magnitude of its maximal degree. We 
presented certain $L_k$ boundedness conditions on the reproduction process and the random 
characteristics that proved to be sufficient for a CMJ process to have finite $k$th 
moment (uniformly in $t \geq 0$). Then, by applying these results we concluded that the 
maximal degree and a single vertex's degree are of the same order of magnitude.

However, a number of questions remained open. It is clear that our results on the maximal 
degree can be strengthened in the sense that, in addition to the $L_k$ convergence proved
here, almost sure convergence of the suitably normed maximal degree is (almost) sure to hold. 
Hopefully, to this end it would be enough to show that requiring the same conditions on
the reproduction and the random characteristics, not just the supremum of the $k$th 
moments, but the $k$th moment of the supremum of the counted CMJ process is 
bounded. This problem appears manageable using some martingale theory, and we are planning 
to return to it in a forthcoming paper.

\end{document}